\def\bl{\begin{lemma}}
\def\el{\end{lemma}}
\def\bth{\begin{theorem}}
\def\eth{\end{theorem}}
\def\bc{\begin{corollary}}
\def\ec{\end{corollary}}
\def\bcj{\begin{conjecture}}
\def\ecj{\end{conjecture}}
\def\bpr{\begin{proposition}}
\def\epr{\end{proposition}}
\def\bde{\begin{definition}}
\def\ede{\end{definition}}
\def\E{\mathbb{E}}
\newcommand{\dist}{\mbox{\rm dist}}
\newcommand{\be}{\begin{eqnarray}}
\newcommand{\ee}{\end{eqnarray}}
\newcommand{\eps}{{\mbox{$\epsilon$}}}
\newcommand{\Z}{{\mathbb Z}}
\newcommand{\F}{{\mathcal F}}
\newcommand{\VV}{{\mathcal V}}
\newcommand{\EE}{{\mathcal E}}
\renewcommand{\and}{\hbox{ {\rm and} }}
\newcommand{\C}{{\mathcal{C}}}
\newcommand{\prob}{\mbox{\bf P}}
\newcommand{\lr}{\leftrightarrow}
\newcommand{\La}{{\mathcal{L}}}
\newcommand{\wg}{{\widetilde{g}}}
\newtheorem{theorem}{Theorem}[section]
\newtheorem{definition}{Definition}[section]
\newtheorem{lemma}[theorem]{Lemma}
\newtheorem{corollary}[theorem]{Corollary}
\newtheorem{proposition}[theorem]{Proposition}
\newtheorem{conjecture}[theorem]{Conjecture}
\theoremstyle{definition}
\numberwithin{equation}{section}
\begin{document}
\title{Is the critical percolation probability local?}
\author{Itai Benjamini, Asaf Nachmias \and Yuval Peres}

\begin{abstract}
We show that the critical probability for percolation on a
$d$-regular non-amenable graph of large girth is close to the
critical probability for percolation on an infinite $d$-regular
tree. This is a special case of a conjecture due to O. Schramm on
the locality of $p_c$. We also prove a finite analogue of the
conjecture for expander graphs.
\end{abstract}

\maketitle

\section{Introduction}
Denote by $p_c(G)$ the critical probability for Bernoulli bond
percolation on an infinite graph $G$, that is,
$$p_c(G) = \inf \Big \{ p \in [0,1] : \prob_p \big ( \exists \, \, \hbox{an infinite component} \big ) >0 \Big \} \, .$$
Is the value of $p_c$ determined by the local geometry of the
graph or by global properties (such as volume growth and
expansion)? In this note we show that the former is the correct
answer for non-amenable graphs with tree-like local geometry, and
discuss a conjecture of Schramm that $p_c$ is locally determined
in greater generality.

Recall that the {\em girth} $g$ of a graph $G$ is the minimum
length of a cycle in $G$. Let $P$ be the transition matrix of the
simple random walk (SRW) on $G$ and let $I$ be the identity matrix. The
{\em bottom of the spectrum} of $I-P$ is defined to be the largest
constant $\lambda_1$ with the property that for all $f \in \ell^2(G)$ we have
\be \label{radius} \langle f , (I-P)f \rangle
\geq \lambda_1 \langle f,f \rangle \, .
\ee
Kesten (\cite{Kes1}, \cite{Kes2}) proved that $G$ is a {\em non-amenable} Cayley graph if and only if $\lambda_1>0$. 
This was extended by Dodziuk \cite{D} to general infinite bounded degree graphs (for more background on non-amenability see \cite{LP} and \cite{W}).

\begin{theorem} \label{localpc} There exists an absolute constant $C>0$ such that if $G$
is a non-amenable regular graph with degree $d$ and girth $g$ such
that the bottom of spectrum of $I-P$ is $\lambda_1>0$, then
$$ p_c(G) \leq {1 \over d-1} + {C \log\big( 1 + {1 \over  \lambda_1^2} \big) \over dg} \, .$$
\end{theorem}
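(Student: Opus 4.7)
The plan is to explore the open cluster of a fixed vertex $o$ by breadth-first search (BFS), exploit that large girth forces the first $r := \lfloor(g-1)/2\rfloor$ generations to be a pure Galton--Watson tree, and then use non-amenability (through $\lambda_1$) to control what happens beyond that depth. Fix $p = 1/(d-1) + \epsilon$ with $\epsilon = C\log(1+1/\lambda_1^2)/(dg)$, and set $m := (d-1)p = 1 + \delta$ with $\delta = (d-1)\epsilon$; the goal is to show that the open cluster of $o$ is infinite with positive probability.

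First I would run BFS from $o$ to depth $r$. Because the ball $B_r(o)$ is a tree (girth $g$), the generation sizes $Z_0,Z_1,\ldots,Z_r$ are distributed exactly as a truncated Galton--Watson process with $\mathrm{Bin}(d-1,p)$ offspring (and $\mathrm{Bin}(d,p)$ at the root), so $\E Z_r \asymp m^r$ and $\Var(Z_r)/(\E Z_r)^2 = O(1/\delta)$ by a standard supercritical GW second-moment estimate. The choice of $\epsilon$ makes $m^r = (1+\delta)^{\Theta(g)} \gtrsim \lambda_1^{-C'}$ for a large constant $C'$ that I can tune by enlarging $C$. A Paley--Zygmund bound then produces a positive-probability event on which $Z_r \geq N := c\lambda_1^{-C'}$, i.e.\ a large subset of $\partial B_r(o)$ is still connected to $o$.

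The heart of the argument is to show that, on the event $\{Z_r \geq N\}$, the cluster of $o$ is infinite with positive probability. I would continue BFS beyond depth $r$, where cycles in $G$ can now cause ``collisions'' between open paths. Each collision corresponds to a pair of distinct self-avoiding open paths from $o$ ending at a common vertex; together they bound a cycle of length $\geq g$ through $o$. The expected contribution of collisions of total length $\ell$ to the second moment of $Z_n$ is bounded by $d^\ell\, p_\ell(o,o)\, p^\ell$, and the spectral hypothesis $\langle f,(I-P)f\rangle\geq\lambda_1\langle f,f\rangle$ forces $p_\ell(o,o) \leq \rho^\ell$ with $\rho = \rho(\lambda_1) < 1$ (handling the bipartite parity issue by passing to $P^2$). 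Summing over $\ell\geq g$ bounds the total expected cycle contribution by roughly $((1+\delta)\rho)^g/\lambda_1$, which is much smaller than $N$ for the chosen parameters. Consequently the ratio $\E Z_n^2/(\E Z_n)^2$ stays bounded for all $n\geq r$, and a further Paley--Zygmund argument upgrades $\{Z_r\geq N\}$ to $\{Z_n>0\text{ for all }n\}$ with positive probability, producing an infinite cluster.

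The main obstacle is the second-moment bound in the previous paragraph: converting the $\ell^2$-spectral estimate $\lambda_1 > 0$ into a quantitative bound on cycles through $o$, and ensuring that the tree-like growth of the GW phase has ``outrun'' the accumulated cycle contributions by the time we reach depth $r$. The specific form $\epsilon \asymp \log(1+1/\lambda_1^2)/(dg)$ in the theorem is precisely what is needed to balance these two effects: one wants $m^{g/2}\gtrsim 1/\lambda_1^2$, equivalently $\delta g\gtrsim \log(1/\lambda_1^2)$, which forces $\epsilon\gtrsim \log(1/\lambda_1^2)/(dg)$.
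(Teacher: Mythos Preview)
Your overall strategy is genuinely different from the paper's, and the step you flag as ``the heart of the argument'' has a real gap.

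The paper does \emph{not} attempt a global second-moment/Paley--Zygmund argument on BFS generation sizes $Z_n$. Instead it uses a two-level sprinkling: each edge is independently declared $p$-open and $\epsilon$-open, with $p=\tfrac{1}{d-1}+\epsilon$. The analytic input is an \emph{escape-probability lemma}: for any finite $A$, a SRW started from the stationary distribution on $A$ never returns to $A$ with probability at least $\lambda_1$. This yields, for every finite $A$, at least $\tfrac{\lambda_1 d}{2}|A|$ ``good'' boundary edges $(x,u)$ such that a $\lambda_1/2$-fraction of the non-backtracking paths of length $\widetilde g=\lceil g/2\rceil-1$ from $u$ avoid $A$. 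An exploration process then grows a connected set $A_t$: at each step it finds an $\epsilon$-unchecked good edge, checks its $\epsilon$-status, and if $\epsilon$-open adjoins the $p$-open tree of depth $\widetilde g$ beyond it (a genuine Galton--Watson tree by the girth hypothesis). With $\epsilon$ as in the statement, the expected increment is at least $4/(d\lambda_1)$, and Azuma--Hoeffding on the increments shows the process survives forever with positive probability. Thus the girth is used only \emph{locally}, to guarantee tree structure in each newly explored ball, while $\lambda_1$ enters only through the escape-probability lemma; no second-moment bound past depth $\widetilde g$ is ever needed.

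In your sketch, the claim that ``the expected contribution of collisions of total length $\ell$ to the second moment of $Z_n$ is bounded by $d^\ell\,p_\ell(o,o)\,p^\ell$'' is not justified and, as stated, is not correct. A collision between two BFS branches produces a cycle at the \emph{point of collision}, not a closed walk through $o$; since $G$ is not assumed transitive, $p_\ell(o,o)$ is simply not the relevant quantity, and in any case the second moment of $Z_n$ (or of the open-SAW count) involves pairs of paths whose union can have arbitrarily complicated overlap structure, not just a single cycle. Even granting a bound of this shape, your summation is internally inconsistent: from $d^\ell p^\ell\rho^\ell$ the geometric ratio is $dp\rho=\tfrac{d}{d-1}(1+\delta)\rho$, not $(1+\delta)\rho$ as you write, and when $\lambda_1$ is small (so $\rho$ is close to $1$) this ratio exceeds $1$ and the sum over $\ell\ge g$ diverges. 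The paper's sprinkling-plus-escape argument is designed precisely to avoid having to control such global second moments.
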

Recall that $p_c(T_d)={1 \over d-1}$ where $T_d$ is an infinite
$d$-regular tree and that for any $d$-regular graph $G$ we have $p_c(G)\geq {1 \over d-1}$. Thus, Theorem \ref{localpc} asserts that
non-amenable graphs with large girth and degree $d$ have $p_c$ close to the lowest possible value, $p_c(T_d)$.

It is easy to construct non-amenable graphs with arbitrary girth,
for example, take the Cayley graph of $\langle a,b,c \mid
c^n=1\rangle$. Olshanskii and Sapir \cite{OS} constructed for any
$k\geq 2$ a group $\Gamma_k$ with the following property. For any
$\ell>0$ there is a set $S_\ell$ consisting of $k$ generators
for $\Gamma_k$, such that the Cayley graph $G(\Gamma_k, S_\ell)$
has girth at least $\ell$ and $\inf_\ell \lambda_1( G(\Gamma_k,
S_\ell))>0$ (as remarked in \cite{OS}, for $k \geq 4$ such groups were  also constructed by Akhmedov \cite{AK}).

Let $G$ be a graph and $v$ a vertex in $G$. Denote by $B_G(v,R)$ the ball of radius $R$ in $G$
centered at $v$, in the graph metric, with its induced graph structure.
We say that a sequence of transitive graphs $G_n$ {\em converges} to $G$ if for any integer $R>0$ there exists $N$
such that $B_{G_n}(v_n, R)$ and $B_G(v,R)$ are isomorphic as rooted graphs, for all $n\geq N$ (note that the choices of $v_n$ and $v$ are irrelevant due to transitivity). Oded Schramm (personal communication) suggested the
following conjecture.

\begin{conjecture}\label{reallocal}
Let $G_n$ be sequence of vertex transitive infinite graphs with $\sup_n p_c(G_n)<1$
such that $G_n$ converges to a graph $G$.
Then $p_c(G_n) \rightarrow p_c(G)$.
\end{conjecture}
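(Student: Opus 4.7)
The plan is to establish the two one-sided inequalities $\liminf_n p_c(G_n) \geq p_c(G)$ and $\limsup_n p_c(G_n) \leq p_c(G)$ separately.

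For the lower semicontinuity direction, I would fix $p < p_c(G)$ and invoke sharpness of the phase transition on transitive graphs, which gives constants $c,C>0$ such that $\prob_p^G[o \lr \partial B_G(o,R)] \leq Ce^{-cR}$ for every $R$. Since connection events inside a ball depend only on its internal edges and $B_{G_n}(v_n,R)\cong B_G(o,R)$ for $n \geq N(R)$, the same bound transfers to $G_n$ up to radius $R$. A finite-size criterion for the subcritical regime -- of the form ``sufficiently small connection probability at one large scale forces exponential decay at all scales'' -- would then deliver $p \leq p_c(G_n)$ for all $n$ large enough.

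For the upper semicontinuity direction, I would fix $p > p_c(G)$ so that $\theta_G(p)>0$, pick $\eta>0$ small, and choose $R$ large enough that $\prob_p^G[o \lr \partial B_G(o,R)] \geq \theta_G(p)-\eta$; by local agreement the same lower bound then holds on $G_n$ once $n$ is large. To upgrade this one-scale crossing probability into an actual infinite cluster on $G_n$, I would attempt a renormalization scheme: tile $G_n$ by overlapping balls of radius $R'\geq R$, declare a ball \emph{good} if it contains a cluster crossing between nested concentric sub-balls, and argue that the good balls stochastically dominate a high-density site percolation process on a coarse-grained graph that itself percolates.

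The main obstacle is clearly this second direction. A finite-size criterion for \emph{supercritical} percolation on an arbitrary vertex-transitive graph requires global geometric input beyond what the hypotheses of the conjecture directly supply. In the setting of Theorem \ref{localpc}, non-amenability (the positive bottom of spectrum $\lambda_1$) together with the tree-like local structure from large girth lets the renormalization close; but in the general case, and especially in the amenable regime, no analog of this geometric input is available and even the coarse-grained graph on which one would want to percolate is not canonical. The uniform bound $\sup_n p_c(G_n)<1$ is clearly meant to rule out degenerate limits such as $G_n \to \Z$, yet turning it into a quantitative finite-size criterion that applies universally seems to me to be the heart of the problem.
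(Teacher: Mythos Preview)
This statement is Conjecture~\ref{reallocal} in the paper, not a theorem; the authors do not prove it and say so explicitly immediately after stating it: ``This conjecture is open for infinite graphs even if we assume that they are uniformly nonamenable.'' The paper's actual results are the special case of non-amenable large-girth graphs (Theorem~\ref{localpc}) and the finite-expander analogue (Theorem~\ref{abs}). There is therefore no proof in the paper to compare your attempt against.

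What you have written is a program rather than a proof, and you essentially concede this in your final paragraph. Your diagnosis of where the difficulty lies is accurate: the supercritical direction requires a finite-size criterion asserting that a single good crossing estimate at one scale forces percolation on an arbitrary transitive graph, and no such criterion is known---this is exactly why the conjecture is open. I would add that your subcritical direction is also incomplete as stated: sharpness gives exponential decay on $G$, and local convergence lets you transfer the bound to $G_n$ up to any \emph{fixed} radius $R$, but concluding $p\le p_c(G_n)$ still needs a finite-size subcritical criterion on $G_n$ itself (small one-scale connection probability implies no percolation), which you invoke without justification and which is not available in this generality. In short, your outline correctly identifies the architecture of the problem and the location of the gap, but it does not close it; nor does the paper.
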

This conjecture is open for infinite graphs even if we assume that they are uniformly nonamenable.
We can prove the following analogue of the
conjecture for finite expander graphs, by extending the analysis
of  \cite{ABS}, Proposition $3.1$. For each $n \ge 1$, let $G_n$ be a {\em
finite} graph and let $U_n$ be a uniformly chosen random vertex in $G_n$. We say
that the sequence of {finite} graphs $\{G_n\}$ {\em converges weakly}
to an infinite rooted graph $(G,\rho)$ (where $\rho$ is a fixed
vertex of $G$) if for each $R>0$ we have
$$ \prob \Big ( B_{G_n}(U_n, R) \neq B_{G}(\rho, R) \Big ) \to 0 \, \qquad \hbox{as } n \to \infty \, ,$$
where the event above means that the balls are not isomorphic as rooted graphs.
This is a special case of the graph limits defined in
\cite{BS2}. For two sets of vertices $A$ and $B$, write $E(A,B)$
for the set of edges with one endpoint in $A$ and the other in
$B$. Recall that the Cheeger constant $h(G)$ of a finite graph
$G=(V,E)$ is defined by
$$ h(G) = \min _{A \subset V} \Big \{ {|E(A,V\setminus A)| \over |A|} \, :\,  0 < |A| \leq |V|/2 \Big \} \, .$$

\begin{theorem} \label{abs} Let $(G,\rho)$ be an infinite bounded degree rooted graph and let $G_n$ be a sequence
of finite graphs with uniform Cheeger constant $h>0$ and a uniform
degree bound $d$, such that $G_n \to G$ weakly. Let $p \in [0,1]$
and write $G_n(p)$ for the graph of open edges obtained from $G_n$
by performing bond percolation with parameter $p$. If $p<p_c(G)$,
then for any constant $\alpha>0$ we have
$$ \prob \Big ( G_n(p) \hbox{ contains a component of size at least } \alpha|G_n|  \Big ) \to 0 \quad \hbox{as } n\to \infty \, ,$$
and if $p>p_c(G)$, then there exists some $\alpha>0$ such that
$$ \prob \Big ( G_n(p) \hbox{ contains a component of size at least } \alpha|G_n|  \Big ) \to 1 \quad \hbox{as } n\to \infty \, .$$
\end{theorem}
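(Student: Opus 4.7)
For the subcritical direction, $p<p_c(G)$, the plan is a first-moment / Markov argument; the Cheeger hypothesis plays no role here. Fix an integer $k$. The event $\{|C(v)|\ge k\}$ in $G_n(p)$ depends only on the configuration inside $B_{G_n}(v,k)$, so the weak convergence $G_n\to G$ gives
$$
\prob\bigl(|C(U_n)|\ge k\text{ in }G_n(p)\bigr)\longrightarrow\prob\bigl(|C(\rho)|\ge k\text{ in }G(p)\bigr)
$$
as $n\to\infty$, and the right-hand side tends to $0$ as $k\to\infty$ since $p<p_c(G)$. Let $X_n$ count the vertices of $G_n$ lying in clusters of size at least $k$; then $\E X_n=|V_n|\,\prob(|C(U_n)|\ge k)$, and the existence of a component of size at least $\alpha|V_n|$ forces $X_n\ge\alpha|V_n|$ (for $k\le\alpha|V_n|$). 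Markov's inequality then concludes this half after first choosing $k$ large and then $n$ large.

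For the supercritical direction, $p>p_c(G)$, the plan is a sprinkling argument extending Proposition~3.1 of \cite{ABS}. Pick $p_1\in(p_c(G),p)$ and set $q=(p-p_1)/(1-p_1)$; then $p$-percolation is distributed as the union of two independent configurations $\omega_1\sim p_1$ and $\omega_2\sim q$. The first step is to produce many big clusters after $\omega_1$. Since $p_1>p_c(G)$, $\theta_1:=\prob_{p_1}(|C(\rho)|=\infty\text{ in }G)>0$; fix $k$ so large that $\prob(|C(\rho)|\ge k\text{ in }G(p_1))\ge\theta_1/2$. Weak convergence then gives $\E X_n^{(1)}\ge\theta_1|V_n|/3$ for $n$ large, where $X_n^{(1)}$ is the number of vertices of $G_n$ in $\omega_1$-clusters of size $\ge k$. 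Since $X_n^{(1)}$ is a $2k$-Lipschitz function of the at most $d|V_n|/2$ edge indicators of $\omega_1$, the Azuma--Hoeffding inequality gives $X_n^{(1)}\ge\beta|V_n|$ with probability $1-o(1)$, where $\beta=\beta(\theta_1)>0$.

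The key step is to show that the $\omega_2$-sprinkling merges these big $\omega_1$-clusters into one dominant $p$-cluster with high probability, and this is where the Cheeger constant $h>0$ enters. Condition on $\omega_1$ with $X_n^{(1)}\ge\beta|V_n|$ and let $\C_1,\ldots,\C_m$ be the big $\omega_1$-clusters, so $m\le|V_n|/k$. Suppose for contradiction that the largest $p$-cluster has size below $\alpha|V_n|$ for a small $\alpha=\alpha(\beta,h,q)$. Then greedy packing of $p$-clusters produces a union $A$ of $p$-clusters, and hence of $p_1$-clusters, with $|A|$ roughly balanced and $|A\cap\bigcup_i\C_i|\in[c_1\beta|V_n|,c_2\beta|V_n|]$ for suitable constants $c_1,c_2$. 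Writing $S=\{i:\C_i\subset A\}$, every edge of $G_n$ between $\bigcup_{i\in S}\C_i$ and $\bigcup_{j\notin S}\C_j$ lies in $\partial A$ and is therefore closed in both $\omega_1$ and $\omega_2$. Applied to the quotient graph in which each $\omega_1$-cluster (big or small) is contracted to a weighted vertex, the Cheeger inequality (whose constant $h$ is inherited from $G_n$) forces the minimum cut separating $\{\C_i:i\in S\}$ from $\{\C_j:j\notin S\}$ to be $\Omega(|V_n|)$; a Menger / sprinkling estimate then shows that each fixed $S$ is "bad" with probability $\exp(-\Omega(|V_n|))$, and the union bound over the $\le 2^m\le 2^{|V_n|/k}$ choices of $S$ is $o(1)$ once $k$ is taken large in terms of $h,q,\beta$.

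The main obstacle is the final step: sprinkling can merge big clusters \emph{indirectly} through the small $\omega_1$-clusters contained in $A$, so one cannot estimate the bad probability just from the direct edges $E_{G_n}(\bigcup_{i\in S}\C_i,\bigcup_{j\notin S}\C_j)$. The cleanest resolution, which is what I would adopt, is to work throughout on the quotient graph (where \emph{all} $\omega_1$-clusters become weighted vertices) and use Menger's theorem together with the inherited Cheeger inequality to produce $\Omega(|V_n|)$ edge-disjoint $\omega_2$-paths between the two big-cluster families, driving the merging probability to $1$ rapidly enough to absorb the $2^{|V_n|/k}$ union bound.
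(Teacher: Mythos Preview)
Your plan is essentially the paper's own proof: a first-moment argument for $p<p_c(G)$ and a sprinkling argument with a union bound over bipartitions of the ``big'' $p_1$-clusters for $p>p_c(G)$. Two remarks on where you diverge.

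First, the ``main obstacle'' you flag is not actually present, and the quotient-graph detour is unnecessary. The paper works directly in $G_n$: given a balanced partition of the big $p_1$-clusters into vertex sets $A,B\subset V(G_n)$, it asks for a path in $G_n$ from $A$ to $B$ \emph{all of whose edges are $\epsilon$-open} (your $\omega_2$). Such a path may freely pass through vertices of small $p_1$-clusters; nothing about $\omega_1$ is used along it. Since $G_n(\epsilon)\subset G_n(p)$, one such path already connects $A$ to $B$ in $G_n(p)$. So the ``indirect merging through small clusters'' issue simply does not arise when you demand the entire path be $\omega_2$-open. Your quotient construction is correct (and in fact slightly less wasteful, since it lets the path be $\omega_1$-open inside clusters), but it is extra machinery.

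Second, and more importantly, your sketch skips the one genuinely quantitative step in the sprinkling estimate. Menger plus Cheeger give $\Omega(|V_n|)$ edge-disjoint $A$--$B$ paths, but by itself that does not yield an $\exp(-\Omega(|V_n|))$ bound: a long path has only a tiny chance of being entirely $\omega_2$-open. The paper (following \cite{ABS}) observes that since $G_n$ has at most $d|V_n|/2$ edges, at least half of these edge-disjoint paths have length at most $L=L(d,h,\delta)$, a constant. Each short path is then $\omega_2$-open with probability at least $q^{L}$, and edge-disjointness gives independence, so the probability that all fail is at most $(1-q^{L})^{c|V_n|}$. This pigeonhole on path lengths is what makes the union bound over $2^{m}\le 2^{|V_n|/k}$ partitions go through once $k$ (equivalently, the paper's $R$) is large enough; you should state it explicitly rather than folding it into ``a Menger/sprinkling estimate''.

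Your variant in the concentration step---counting vertices in $\omega_1$-clusters of size $\ge k$ and using that this count is $2k$-Lipschitz in the edge variables---is a legitimate alternative to the paper's use of the local quantity $|B_{p_1}(v,R)|\ge R$ with Lipschitz constant $d^R$; both give the needed concentration.
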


For the reader's convenience, we present the proof of Theorem
\ref{abs} in Section \ref{abspf}. 

\subsection{Further discussion.} Conjecture \ref{reallocal} suggests that the critical percolation probability
is locally determined. This contrasts with critical exponents
which are believed to be {\em universal} and depend only on global
properties of the graph. For instance, the value of $p_c$ on the
the two dimensional square lattice is ${1 \over 2}$, but on the two
dimensional triangular lattice it is $2{\sin(\pi/18)}$; however,
the critical exponents are believed to be the same.

It is worth noting another example of the locality of $p_c$ where the limit graph
is the lattice $\Z^d$. For
$d>1$ and $n>1$, write $\Z_n^d$ for the $d$-dimensional torus with
side $n$. The following theorem is an immediate corollary of a theorem of Grimmett and Marstrand \cite{GM} combined with
the fact that the critical probability of a quotient graph is always at least the critical probability of the original graph
(see \cite{BS}, \cite{C} or \cite{LP}).
\begin{theorem} [Grimmett, Marstrand \cite{GM}] For any $d>1$ and $k$ satisfying $2 \leq k < d$ we have
$$ p_c ( \Z^k \times \Z_n^{d-k} ) \to p_c(\Z^d) \quad \hbox{as } n \to \infty \, .$$
\end{theorem}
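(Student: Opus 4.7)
The plan is to establish the two matching inequalities $\liminf_n p_c(\Z^k\times\Z_n^{d-k})\ge p_c(\Z^d)$ and $\limsup_n p_c(\Z^k\times\Z_n^{d-k})\le p_c(\Z^d)$ separately, and then combine them.

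For the lower bound, I would observe that $\Z^k\times\Z_n^{d-k}$ is the quotient of $\Z^d\cong\Z^k\times\Z^{d-k}$ by the translation subgroup $\{0\}^k\times(n\Z)^{d-k}$ acting only on the last $d-k$ coordinates. The cited monotonicity of $p_c$ under quotients then gives
$$
p_c(\Z^k\times\Z_n^{d-k})\ \ge\ p_c(\Z^d)
$$
uniformly in $n$, which is the $\liminf$ half for free.

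For the upper bound, I would treat the Grimmett--Marstrand slab theorem as a black box. Fix any $p>p_c(\Z^d)$; their result supplies a finite $N=N(p,d)$ such that Bernoulli$(p)$ percolation on the slab $\Z^2\times\{0,1,\dots,N\}^{d-2}$ produces an infinite cluster almost surely, i.e.\ $p_c\bigl(\Z^2\times\{0,\dots,N\}^{d-2}\bigr)\le p$. For $k\ge 2$ and $n\ge N+2$, this slab embeds as a subgraph of $\Z^k\times\Z_n^{d-k}$: send the two $\Z$-coordinates of the slab into the first two $\Z$-factors of $\Z^k$, distribute the remaining $d-2$ bounded coordinates arbitrarily among the $k-2$ leftover $\Z$-factors and the $d-k$ copies of $\Z_n$, and use the canonical injection $\{0,\dots,N\}\hookrightarrow\Z_n$, whose image is an induced path provided $n\ge N+2$ (so that $0$ and $N$ are not joined by the wrap-around edge). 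Percolation on a subgraph is dominated by percolation on the ambient graph, so $p_c(\Z^k\times\Z_n^{d-k})\le p$ for all sufficiently large $n$, hence $\limsup_n p_c(\Z^k\times\Z_n^{d-k})\le p$. Letting $p\downarrow p_c(\Z^d)$ closes the argument.

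All the real depth of the theorem is concealed inside Grimmett--Marstrand, which I do not propose to reprove; the remaining work consists only of the two bookkeeping observations above. Neither is a serious obstacle: quotient monotonicity is the fact quoted immediately before the statement, and the subgraph embedding merely requires $k\ge 2$ (so the slab's $\Z^2$ fits inside $\Z^k$) and $n\ge N(p)+2$ (so $\{0,\dots,N\}$ sits inside $\Z_n$ without wrap-around identifications), both of which are provided by the hypothesis and our choice of $n$.
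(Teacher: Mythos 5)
Your proposal is correct and is exactly the argument the paper has in mind: the lower bound from the quotient-monotonicity of $p_c$ (the fact quoted just before the statement) and the upper bound by embedding a Grimmett--Marstrand slab $\Z^2\times\{0,\dots,N\}^{d-2}$ as a subgraph of $\Z^k\times\Z_n^{d-k}$ for $n$ large, using $k\ge 2$. The paper states this only as an ``immediate corollary'' without writing out the details; your write-up supplies them faithfully.
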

Observe that this is theorem is a special case of Conjecture \ref{reallocal}. For background and further conjectures regarding
percolation on infinite graphs see \cite{BS, LP}.




\section{Uniform escape probability}
In this section we prove a useful lemma.
\begin{lemma} \label{return} Consider a reversible irreducible Markov chain $\{X_t\}$ on a countable state space $V$, with infinite stationary measure $\pi$  and transition matrix $P$, such that the bottom of the spectrum of
$I-P$ is $\lambda_1>0$ (that is, (\ref{radius}) holds for any $f\in
\ell^2(\pi)$).
 Let $A \subset V$ be a nonempty set of states with $\pi(A)<\infty$ and let $\pi_A(\cdot) =\pi(\cdot)/\pi(A)$ be the normalized restriction of $\pi$ to $A$. Then
$$ \prob_{\pi_A} \Big ( X_t \hbox{ never returns to } A \Big ) \geq \lambda_1 \, .$$
\end{lemma}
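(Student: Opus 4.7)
The plan is to identify the quantity $\pi(A)\cdot \prob_{\pi_A}(X_t\text{ never returns to }A)$ with a Dirichlet energy and to compare that energy with $\pi(A)$ via the spectral inequality (\ref{radius}). The natural candidate test function is the equilibrium potential $h(x)=\prob_x(\tau_A<\infty)$, but $h$ need not lie in $\ell^2(\pi)$, so I will instead work with finite truncations. Concretely, fix an exhausting sequence of finite sets $A\subset B_1\subset B_2\subset\cdots$ with $\bigcup_n B_n=V$, and set
$$ h_n(x)=\prob_x(\tau_A<\tau_{B_n^c}), $$
where $\tau_A$ and $\tau_{B_n^c}$ are the first hitting times of $A$ and of $V\setminus B_n$, respectively. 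Then $h_n\equiv 1$ on $A$, $h_n\equiv 0$ outside $B_n$, and a one-step analysis shows $Ph_n=h_n$ on $B_n\setminus A$. Since $h_n$ is supported on the finite set $B_n$ it lies in $\ell^2(\pi)$, so (\ref{radius}) yields
$$ \langle h_n,(I-P)h_n\rangle \;\geq\; \lambda_1\,\langle h_n,h_n\rangle \;\geq\; \lambda_1\,\pi(A), $$
where $h_n\geq \mathbf{1}_A$ is used in the last step.

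Next I would evaluate the Dirichlet energy explicitly. Because $h_n$ vanishes off $B_n$ and $(I-P)h_n$ vanishes on $B_n\setminus A$, only the sites $x\in A$ contribute, and a one-step computation gives $Ph_n(x)=\prob_x(\tau_A^+<\tau_{B_n^c})$ for $x\in A$, where $\tau_A^+=\inf\{t\geq 1: X_t\in A\}$. Hence for every $n$,
$$ \sum_{x\in A}\pi(x)\,\prob_x\bigl(\tau_A^+\geq \tau_{B_n^c}\bigr) \;=\; \langle h_n,(I-P)h_n\rangle \;\geq\; \lambda_1\,\pi(A). $$
The events $\{\tau_A^+\geq \tau_{B_n^c}\}$ are decreasing in $n$ and shrink to $\{\tau_A^+=\infty\}$, because $\tau_{B_n^c}\to\infty$ pointwise as $B_n\uparrow V$. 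Using $\pi(A)<\infty$, dominated convergence then gives
$$ \pi(A)\,\prob_{\pi_A}(\tau_A^+=\infty) \;=\; \lim_{n\to\infty}\sum_{x\in A}\pi(x)\,\prob_x(\tau_A^+\geq \tau_{B_n^c}) \;\geq\; \lambda_1\,\pi(A), $$
and dividing by $\pi(A)>0$ yields the lemma.

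The main obstacle is the one flagged at the outset: the equilibrium potential $h$ is not in general an $\ell^2(\pi)$ function, which rules out a direct application of the spectral inequality. The truncation by the finite sets $B_n$ and the monotone passage to the limit are the standard device for overcoming this; everything else is routine, namely a first-step computation for the Dirichlet energy and a monotone convergence argument at the end.
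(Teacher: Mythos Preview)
Your argument is correct and essentially identical to the paper's: both use the hitting-probability function $h_n(x)=\prob_x(\tau_A<\tau_{B_n^c})$ (the paper writes it as $f(x)=\prob_x(X_\tau\in A)$ with $B=B_n^c$), apply the spectral inequality (\ref{radius}), compute the Dirichlet form via a one-step analysis, and let the finite complement shrink. One small point: your requirement that $B_n$ be finite forces $A$ itself to be finite, whereas the lemma only assumes $\pi(A)<\infty$; the paper handles this by taking $B$ with $V\setminus(A\cup B)$ finite rather than $V\setminus B$ finite, and your proof adapts the same way by setting $B_n=A\cup F_n$ with $F_n$ finite and exhausting $V\setminus A$.
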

\begin{proof}
Let $B \subset V$ be disjoint from $A$ such that  $V\setminus (A \cup B)$ is finite. Define
$$ \tau = \min\{ t \geq 0 : X_t \in A \cup B \} \, \, \qquad \hbox{and} \qquad \tau^+ = \min\{ t > 0 : X_t \in A \cup B \} \, .$$
The irreducibility assumption and the finiteness of the complement
of $A \cup B$ imply that $\tau^+ <\infty$ a.s.\ for any starting
state. We will show that for all sets $B$ as above, \be
\label{eqb} \prob_{\pi_A} \big ( X_{\tau^+} \in B \big ) \geq
\lambda_1 \, .\ee The assertion of the lemma then follows by
enumerating $V\setminus A$ as $v_1,v_2, v_3, \ldots$, taking
$B=B_k=\{v_j \, : \,  j \ge k\}$ and intersecting the events in
(\ref{eqb}) over all these sets $B=B_k$ for $k \ge 1$. Let
$$ f(x) = \prob _x ( X_\tau \in A ) \, .$$
Observe that $f \equiv 1$ on $A$ and $f \equiv 0$ on $B$.  For all $x\in G$,
$$ (Pf)(x) = \prob_x \big ( X_{\tau^+} \in A \big ) \, .$$
In particular, $f$  is harmonic (satisfies $Pf=f$)
 on $G \setminus (A\cup B)$. Thus
$((I-P)f)(x) = \prob _x \big ( X_{\tau^+} \in B \big )$ for $x\in A$ and $((I-P)f)(x)=0$ for $x\in G\setminus(A \cup B)$. Therefore
$$ \langle f , (I-P)f \rangle = \sum_{x \in A} \pi(x) \, \prob _x \big ( X_{\tau^+} \in B \big ) = \pi(A)\, \prob_{\pi_A} \big ( X_{\tau^+} \in B \big ) \, .$$
On the other hand, clearly,
$$ \langle f,f \rangle \geq \sum_{x\in A} \pi(x) f(x)^2 = \pi(A) \, .$$
The claim (\ref{eqb}) follows by inserting the last two formulas
in (\ref{radius}).
\end{proof}

\section{Proof of Theorem \ref{localpc}} \label{localpcpf}
We return to the setting of Theorem \ref{localpc}.
Let $G$ be regular graph of degree $d$ and girth $g$ and write $\wg:=\lceil g/2 \rceil-1$. Given a set of vertices $A$ in $G$ and
 $\alpha \in (0,1)$, we say that an edge $(x,u)$ is $(\alpha,A)$-{\bf
good} if $x\in A$ and at least an $\alpha$ fraction of the $(d-1)^{\wg}$ non-backtracking
paths of length $\wg$ emanating from $u$, for which the first step is not $x$, avoid $A$ (in particular,
$u \not \in A$.) The following lemma is a corollary of Lemma \ref{return}.

\begin{corollary} \label{goodvertices} Let $G$ be a regular graph with degree $d$ and girth
$g$. If $G$ is nonamenable, i.e., it satisfies $\lambda_1>0$, then for any finite set $A \subset V(G)$,
there exist at least ${\lambda_1 d \over 2} |A|$ edges $(x,u)$
which are $(\lambda_1/2,A)$-good.
\end{corollary}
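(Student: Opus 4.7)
The plan is to combine Lemma \ref{return} (applied to simple random walk on $G$) with a martingale argument comparing NBRW and SRW. Let
$$h(v) := \prob_v\bigl(\text{SRW on }G\text{ never enters }A\bigr),$$
so that $h\equiv 0$ on $A$ and $h$ is bounded harmonic on $V(G)\setminus A$. Applying Lemma \ref{return} to SRW on $G$ (with stationary measure $\pi\equiv d$) and expanding on the first step yields
$$\sum_{x\in A}\sum_{u\sim x} h(u) \;=\; d\sum_{x\in A}\prob_x\bigl(X_t\notin A\ \forall\, t\ge 1\bigr) \;\ge\; \lambda_1 d\,|A|.$$

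The heart of the argument is to prove, for every directed edge $(x,u)$ with $x\in A$, the pointwise bound $q(x,u)\ge h(u)$, where $q(x,u)\in[0,1]$ is the fraction in the definition of $(\alpha,A)$-goodness. To this end, set $Y_0=x$, $Y_1=u$, and for $t\ge 2$ let $Y_t$ be uniform over the $d-1$ neighbors of $Y_{t-1}$ distinct from $Y_{t-2}$. Then $(Y_1,\dots,Y_{\wg+1})$ is uniformly distributed over the NBRW paths of length $\wg$ emanating from $u$ with first step not $x$, so $q(x,u)=\prob(\tau>\wg+1)$, where $\tau:=\min\{t\ge 1:Y_t\in A\}$. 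Harmonicity of $h$ at each $Y_{t-1}\notin A$ gives
$$\E\bigl[h(Y_t)\bigm|Y_{t-1},Y_{t-2}\bigr] \;=\; \frac{d\,h(Y_{t-1})-h(Y_{t-2})}{d-1},$$
and a direct check shows that the process
$$N_t \;:=\; h(Y_t) - \frac{h(Y_{t-1})}{d-1}$$
is a martingale when stopped at $\tau$. Since $N_1=h(u)-h(x)/(d-1)=h(u)$ and $N_{t\wedge\tau}\le h(Y_{t\wedge\tau})\le \mathbf{1}_{\tau>t}$, optional stopping at $t=\wg+1$ yields $h(u)\le \prob(\tau>\wg+1)=q(x,u)$.

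Summing this pointwise bound over the $d|A|$ directed edges $(x,u)$ with $x\in A$ produces $\sum q(x,u)\ge \lambda_1 d|A|$, and a Markov-type inequality (using $0\le q\le 1$) then yields at least $\lambda_1 d|A|/2$ edges with $q(x,u)\ge \lambda_1/2$, which are exactly the $(\lambda_1/2,A)$-good edges.

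The main obstacle is locating the correct martingale: $h(Y_t)$ itself is not a submartingale under NBRW, since excluding $Y_{t-2}$ from the allowable next steps shifts the one-step expectation of $h$ by $-h(Y_{t-2})/(d-1)$. The subtraction of $h(Y_{t-1})/(d-1)$ in $N_t$ is designed precisely to cancel this bias, turning the comparison between SRW escape probability and NBRW avoidance probability into a clean application of optional stopping. Note that the girth of $G$ enters the argument only through the definition of $\wg$, which governs the number of NBRW steps being counted; the martingale identity itself does not require any tree-like structure.
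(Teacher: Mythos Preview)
Your proof is correct, and its overall architecture coincides with the paper's: apply Lemma~\ref{return} to obtain $\sum_{x\in A}\sum_{u\sim x}h(u)\ge \lambda_1 d|A|$, establish the pointwise comparison $q(x,u)\ge h(u)$, and finish with an averaging (Markov-type) argument. The genuine difference lies in how the pointwise bound $q(x,u)\ge h(u)$ is obtained. The paper exploits the girth hypothesis directly: since $B_G(u,\wg)$ is a tree, the loop erasure of SRW run from $u$ until it reaches distance $\wg$ is a uniformly random non-backtracking path of length $\wg$; on the event that the SRW avoids $A$ forever, this loop-erased path avoids $A$ and cannot start with the step to $x$, which yields $h(u)\le q(x,u)$ in one line. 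Your route replaces this geometric picture with the martingale $N_t=h(Y_t)-h(Y_{t-1})/(d-1)$ for the non-backtracking walk and an optional-stopping bound. This is slightly more work to set up but has the advantage you note: it never invokes the tree structure of $B_G(u,\wg)$, so the inequality $q(x,u)\ge h(u)$ (and hence the corollary) holds for any $d$-regular nonamenable graph, with $\wg$ an arbitrary parameter. The paper's argument is shorter and more visual, while yours isolates the corollary from the girth hypothesis.
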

\begin{proof} For an edge $(x,u)$ with $x \in A$, let
$\beta_{(x,u)}= \prob_x(X_1=u \and \forall t>0 \; \, X_t \not \in A )$,
where $\{X_t\}$ is a SRW in $G$, started at $x$.
Let $\tau:=\min\{t \,:\, \dist(u,X_t)=\wg \}$.
Since the ball $B_G(u,\wg)$ is a spherically symmetric tree, the loop erasure of $(X_t)_{t=0}^\tau$ yields a uniform random non-backtracking path of length $\wg$ from $u$. Thus if $\beta_{(x,u)} \ge \alpha$, then the edge $(x,u)$ is $(\alpha, A)$-good.
By Lemma \ref{return},
$$ {1 \over d|A|} \sum_{(x,u) : x \in A} \beta_{(x,u)} \geq \lambda_1 \, ,$$
and we conclude that at least ${\lambda_1 d \over 2}|A|$ edges $(x,u)$ with
$x \in A$ must satisfy $\beta_{(x,u)} \geq \lambda_1/2$.
\end{proof}

\noindent {\bf Proof of Theorem \ref{localpc}.} Let $\eps>0$ be a
small number and set $p={1 \over d-1}+\eps$. For each edge $e$ we
draw two independent Bernoulli random variables $X_e(p)$ and
$Y_e(\eps)$ with means $p$ and $\eps$ respectively. We say that an
edge is {\em open} if one of these variables takes the value $1$
and {\em closed} otherwise. We also say that the edge $e$ is
$p$-open if $X_e(p)=1$ and $\eps$-open if $Y_e(\eps)=1$. For a
vertex $v$ we write $\C(v)$ for the open cluster of $v$. The
probability that an edge is closed is $(1-p)(1-\eps)$, hence
$|\C(v)|$ is dominated by the cluster size in $(p+\eps)$-bond
percolation. Our goal is to show that with positive probability
$|\C(v)|=\infty$.

We perform the following exploration process, which will produce
an increasing sequence $\{A_t\}$ of connected vertex sets in which
$A_t \subset \C(v)$ for all $t$. At each step, some of the edges
touching $A_t$ will be $\eps$-closed and some will be
$\eps$-unchecked. We begin by setting $A_0$ to be the $p$-cluster
of $v$ (that is, all the vertices connected to $v$ by $p$-open
paths) and all the edges touching $A_0$ are $\eps$-unchecked. We
assume that $A_0$ is finite (otherwise we are finished). At step
$t>1$ let $\EE_{t-1}$ be the set of $\eps$-unchecked edges $(x,u)$ such
that $(x,u)$ is $(\lambda_1/2,A_{t-1})$-good. If
$\EE_{t-1}$ is empty, the process ends. If not, we choose $(x,u)\in
\EE_{t-1}$ according to some prescribed ordering of the edges
and check whether the edge is $\eps$-open. If it is $\eps$-closed we put
$A_t = A_{t-1}$ and continue to the next step of the process.
Otherwise,  we let
$$A_t = A_{t-1} \cup \VV \, ,$$
where $\VV$ is the set of vertices $v$ of distance at most
$\wg$ from $u$ such that the unique
path of length at most $\wg$ between $u$ and $v$ avoids $A_{t-1}$ and is $p$-open.

%
This finishes the description of the exploration process. To
analyze this process we introduce the following random variable
$$ Z_t = \big | \big \{ e : e \hbox{ is an $\eps$-closed and $\eps$-checked edge touching $A_t$} \big \} \big | \, .$$
Let $\tau$ be the stopping time
$$ \tau = \min \Big \{ t : |A_t| < {2t \over \lambda_1 d} \Big \} \, .$$
At each step we check the $\eps$-status precisely one edge, hence
$Z_t \leq t$ for all $t$. Thus, by Corollary \ref{goodvertices}, if
$|A_t| > {2t \over \lambda_1 d}$ there must exist at least one
$\eps$-unchecked edge $(x,u)$ which is $(\lambda_1/2, A_t)$-good.
Write $\F_t$ for the $\sigma$-algebra generated by the $\eps$ and
$p$ status of the edges we examined in the exploration process up
to time $t$ and let $\xi_t = |A_{t+1}| - |A_t|$. By the discussion
above we have that
\begin{eqnarray} \label{lowerexp} \E [ \xi_t \mid\F_{t-1}, \tau > t ] &\geq& {\eps \lambda_1 \over 2} \sum_{j=1}^\wg (1+\eps(d-1))^j
\geq {\lambda_1 [ (1+\eps(d-1))^\wg - 1 ] \over 2(d-1)} \, .
\end{eqnarray}
To see the first inequality in (\ref{lowerexp}), recall that
$(x,u)$ is $\eps$-open with probability $\eps$. Also, for any $j
\leq \wg$ the expected number of vertices of distance $j$ from $u$
such that the path between them and $u$ avoids $A_t$ and is
$p$-open is at least ${\lambda_1 \over 2} (p(d-1))^j={\lambda_1
(1+\eps(d-1))^j \over 2}$.

We now assume that \be\label{girthbound}\wg \geq {\log\big (
1+{8\over \lambda_1^{2}} \big ) \over \log(1+\eps(d-1)) } \, ,\ee
so that $\E [\xi_t \mid \F_{t-1}, \tau > t] \geq 4
d^{-1}\lambda_1^{-1}$ by (\ref{lowerexp}). Since $|\xi_t| \leq
(d-1)^{\wg}$, Azuma-Hoeffding's inequality (see Chapter $7$ of
\cite{AS}) gives that for any $t>1$ \be\label{ld} \prob \big (
\tau = t+1 \, \mid \, A_0 \big ) \leq \prob \Big ( \sum_{i=1}^t
\xi_i \leq {2t \over d\lambda_1} \Big ) \leq e^{-ct} \, ,\ee where
$c=2\lambda_1^{-2}d^{-2}(d-1)^{-2\wg}>0$. Since $|A_t|$ is a
non-decreasing sequence we have that $\tau > {\lambda_1 d |A_0|
\over 2}$.
For any $K>0$ there is some positive probability (depending on
$K$) of having $|A_0| \geq K$ and we infer from (\ref{ld}) that
\begin{eqnarray*} \prob (\tau = \infty) &\geq& \prob (|A_0| \geq K) \prob \big (\tau = \infty \, \mid \, |A_0| \geq K \big ) \\
&\geq& \prob (|A_0| \geq K) \Big [ 1- \sum_{t \geq {\lambda_1 d K
\over 2}} e^{-ct} \Big ] > 0 \, ,
\end{eqnarray*}
as long as we choose $K=K(g, \lambda_1, d)$ to be large enough.
The event $\tau = \infty$ implies that $|\C(v)|=\infty$, and
hence, by (\ref{girthbound}) when $\eps \geq C (dg)^{-1} \log\big(
1 + {1 \over \lambda_1^2} \big)$ there is positive probability of
an infinite component in $\big ( {1 \over d-1} + 2\eps \big
)$-bond percolation (for $\eps \leq (d-1)^{-1}$ one can take $C=128$ using the inequalities $\log(1+8x)\leq 8\log(1+x)$ and $\log(1+x)\geq x/2$ valid for $x\in(0,1)$). This concludes the proof of the theorem.
 \qed

\section{Proof of Theorem \ref{abs}}\label{abspf}
Without loss of generality assume that $|G_n|=n$. We first take $p < p_c(G)$ and fix $\alpha>0$. Since $p<p_c(G)$, for any $\eps>0$ there exists $R=R(\eps)$ large enough such that in $G$
$$ \prob_p \big ( \rho \lr \partial B(\rho, R) \big ) < \eps \, .$$
Thus for large enough $n$ we have in $G_n$
$$ \La \times \prob_p \Big ( U_n \lr \partial B(U_n, R) \Big ) \leq \eps \, ,$$
where $\La$ is the law of $U_n$. Since $G$ has bounded degree, we
deduce that for any $\eps>0$ there exists $n$ large enough
such that
$$ \La \times \prob_p \Big ( |\C(U_n)| \geq d^{R+1}  \Big ) \leq \eps \, .$$
Write $\C_1(n)$ for the largest component of $G_n(p)$ and note
that as long as $d^{R+1} \leq  \alpha n$ we have that
$$ \La \times \prob_p \Big ( |\C(U_n)| \geq d^{R+1}  \Big ) \geq \alpha \prob_p \big (  |\C_1(n)| \geq \alpha n \big ) \, ,$$
and we get that
$$ \prob_p \big (  |\C_1(n)| \geq \alpha n \big ) \leq {\eps \alpha^{-1} }\, ,$$
which proves the first assertion of the theorem. \\

To prove the second assertion of the theorem we use a sprinkling
argument, as in \cite{ABS}. Assume $p > p_c(G)$ and for some
$\eps>0$ let $p_1 = p_c(G) + \eps$ such that
$1-p=(1-p_1)(1-\eps)$. We first consider $G_n(p_1)$. Since $p_1 >
p_c(G)$, there exists some $\delta>0$ such that for all $R>0$ we
have
$$ \prob _{p_1} \big ( \rho \lr \partial B(\rho, R) \big ) \geq \delta \, .$$
For $v \in G_n$, write $B_{p_1}(v,R)$ for the set of vertices in
$G_n(p_1)$ which are connected to $v$ in a $p_1$-open path of
length at most $R$. We get that for any $R>0$ there exists $n_0$
such that for $n\geq n_0$ we have in $G$
$$ \La \times \prob\Big ( |B_{p_1}(U_n, R)| \geq R \Big ) \geq \delta/2 \, .$$
Let $X_R$ denote the random variable
$$ X_R = \Big | \Big \{ v \in G_n : |B_{p_1}(v,R)| \geq R \Big \} \Big | \, ,$$
so that $\E X_R \geq \delta n/2$. On the other hand, note that
changing the status of a single edge can change $X_R$ by at most
$d^R$, where $d$ is the degree bound of $G_n$. The method of
bounded differences (see Theorem 3.1 of \cite{Mc} or Chapter 7 of
\cite{AS}) gives that \be\label{diff} \prob \Big ( X_R \leq
{\delta n \over 4} \Big ) \leq \exp\big ( -{\delta^2 n \over 8
d^{2R}} \big ) \, .\ee

Assume now that $X_R \geq \delta n/4$ and consider the connected
components of $G_n(p_1)$ of size at least $R$. Their number $m$ is
at most $n/R$. We now consider the union of $G_n(p_1)$ with
$G_n(\eps)$ and claim that many of these components join together
by edges of $G_n(\eps)$ and create a component of linear size.
Indeed, consider a partitition of the $m$ large components of
$G_n(p_1)$ into two sets, $A$ and $B$, each spanning at least
$\delta n / 12$ vertices. If for any such partition there is an
open path in $G_n(\eps)$ connecting $A$ and $B$, then there exists
a component of size at least $\delta n /12$ in $G_n(p_1) \cup
G_n(\eps)$. Since $G_n$ has Cheeger constant at least $h>0$ we get
by Menger's Theorem that for any such $A$ and $B$ there are at
least $h\delta n /12$ edge disjoint paths connecting $A$ to $B$.
Since there are most $d n/2$ edges in $G_n$ we have that at least
a half of these paths must be of length at most ${12d \over h
\delta}$. The probability that all these paths are closed in
$G_n(\eps)$ is at most
$$ \Big [ 1 - \eps^{12d \over h\delta} \Big ] ^{h\delta n \over 24} \, .$$
There are at most $2^m$ different possibilities for choosing $A$ and $B$.
Hence, the probability that there exists such $A$ and $B$ is at most
$$ 2^m \Big [ 1 - \eps^{12d \over h\delta} \Big ] ^{h\delta n \over 24} \leq \exp \Big ( {n/R - \eps^{12d \over h \delta} h \delta n/24} \Big)  \, ,$$
which goes to $0$ as long as $R$ is chosen such that $R^{-1} < \eps^{12 d \over h\delta}h\delta$. This together with (\ref{diff}) shows that
$$ \prob \Big ( G_n(p) \hbox{ contains a component of size at least } {\delta n \over 12} \Big ) \to 1 \qquad \hbox{as } n\to \infty \, .$$

 \qed \\

\noindent {\bf Acknowledgements:} We are indebted to Mark Sapir and Oded Schramm for useful
discussions.

\vspace{.05 in}\noindent
{\bf Itai Benjamini}: \texttt{itai.benjamini(at)weizmann.ac.il} \\
The Weizmann Institute of Science, \\
Rehovot POB 76100, Israel.

\medskip \noindent
{\bf Asaf Nachmias}: \texttt{asafn(at)microsoft.com} \\
Microsoft Research,
One Microsoft way,\\
Redmond, WA 98052-6399, USA.

\medskip \noindent
{\bf Yuval Peres}: \texttt{peres(at)microsoft.com} \\
Microsoft Research,
One Microsoft way,\\
Redmond, WA 98052-6399, USA.

\end{document}